\newcommand{\ra}{\rightarrow}
\newtheorem{theorem}{Theorem}
\newtheorem{lemma}{Lemma}
\newenvironment{proof}{\noindent{\bf Proof:}}{\hspace*{1mm} \hfill$\Box$ \par \bigskip }
\date{}
\title{\bf On Some\\
Multicolor Ramsey Numbers\\
Involving $K_3+e$ and $K_4-e$}
\begin{document}

\thispagestyle{empty}
\maketitle \footnote[0]{Research supported by the NSF Research Experiences for Undergraduates Program (\#1062128) held at the Rochester Institute of Technology during the summer of 2011.}
\thispagestyle{empty}

\vspace{-18mm}
\begin{center}

\begin{multicols}{2}
{\bf Daniel S. Shetler}\\
 Department of Mathematics\\
 Whitworth University\\
Spokane, WA 99251\\
 {\tt dshetler12@my.whitworth.edu}\\[4mm]

{\bf Michael A. Wurtz}\\
 Department of Mathematics\\
 Northwestern University \\
 Evanston, IL 60208 \\
 {\tt wurtz@u.northwestern.edu}\\[4mm]
\end{multicols}

{\bf Stanis\l{}aw P. Radziszowski}\\
 Department of Computer Science \\
 Rochester Institute of Technology \\
 Rochester, NY 14623 \\
 {\tt spr@cs.rit.edu}\\[4mm]

\end{center}

\vspace{3mm}


\noindent{\bf Abstract:} The Ramsey number $R(G_1, G_2, G_3)$ is the smallest
positive integer $n$ such that for all 3-colorings of the edges of $K_n$
there is a monochromatic $G_1$ in the first color, $G_2$ in the second color,
or $G_3$ in the third color. We study the bounds on various 3-color
Ramsey numbers $R(G_1, G_2, G_3)$, where
$G_i \in \{K_3, K_3+e, K_4-e, K_4\}$. The minimal
and maximal combinations of $G_i$'s correspond to
the classical Ramsey numbers $R_3(K_3)$ and $R_3(K_4)$,
respectively, where $R_3(G) = R(G, G, G)$. Here, we focus
on the much less studied combinations between these two cases.

Through computational and theoretical means we establish
that $R(K_3, K_3, K_4-e)=17$, and by construction we raise
the lower bounds on $R(K_3, K_4-e, K_4-e)$ and
$R(K_4, K_4-e, K_4-e)$. For some $G$ and $H$ it was
known that $R(K_3, G, H)=R(K_3+e, G, H)$; we prove this
is true for several more cases including
$R(K_3, K_3, K_4-e) = R(K_3+e, K_3+e, K_4-e)$.

Ramsey numbers generalize to more colors, such as in the famous
4-color case of $R_4(K_3)$, where monochromatic triangles are avoided.
It is known that $51 \leq R_4(K_3) \leq 62$.
We prove a surprising theorem stating that if
$R_4(K_3)=51$ then $R_4(K_3+e)=52$, otherwise $R_4(K_3+e)=R_4(K_3)$.

\section{Introduction}

For undirected simple graphs
$G_1, \dots, G_m$, a $(G_1, \dots, G_m)$-coloring is a partition
of the edges of a complete graph into $m$ colors such that no
color $i$ contains $G_i$ as a subgraph. A $(G_1, \dots, G_m;n)$-coloring
is a $(G_1, \dots, G_m)$-coloring of $K_n$. Further,
$\mathcal{R}(G_1, \dots, G_m)$ and $\mathcal{R}(G_1, \dots, G_m;n)$
will denote the sets of all corresponding colorings.
The Ramsey number $R(G_1, \dots, G_m)$ is defined as
the minimum number of vertices $n$ such that no
$(G_1, \dots, G_m;n)$-coloring exists. Note that a
standard graph can be considered as a 2-coloring of
the edges of a complete graph, where the edges of
the graph are those in the first color. As such,
we will call a $(G_1,G_2)$-coloring a $(G_1,G_2)$-good
graph. The known values and bounds for various types
of Ramsey numbers are compiled in the dynamic survey
{\em Small Ramsey Numbers} by the third author \cite{Rad}.\\

\noindent We will use the following notation throughout the paper:
\vspace{-2mm}
\begin{align*}
N_c(v) &: \text{neighborhood of vertex $v$ in color $c$} \\
G-v &: \text{coloring or graph induced by } V(G) \setminus \{v\} \\
G \setminus \{u, v\} &: G \text{ without edge } \{u, v\} \\
J_n &: K_n-e, \text{ equal to } K_n \text{ with one edge deleted} \\
K_n+e &: K_n \text{ connected to an additional vertex by one edge} \\
R_n(G) &: n\text{-color Ramsey number } R(G, \dots, G) \\
\text{colors} &: \text{we refer to consecutive colors corresponding to the parameters}
\end{align*}

\vspace{-4mm}
\indent\indent\indent \ \  of Ramsey colorings as red, green, blue, and yellow\\

We will be using the Ramsey arrowing operator $\ra$. We say that $F \ra (G_1, \dots, G_m)$ holds iff for all partitions of the edges of $F$ into $m$ colors $F_1, \dots, F_m$ there exists $G_i \subseteq F_i$ for some $1 \leq i \leq m$. The Ramsey number $R(G_1, \dots, G_m)$ can also be defined using the arrowing operator as the smallest $n$ such that $K_n \ra (G_1, \dots, G_m)$.\\

Observe that if $H'$ is a subgraph of $H$, then any $(G, H')$-good graph is also a $(G, H)$-good graph. Thus $\mathcal{R}(G,H') \subseteq \mathcal{R}(G, H)$, and therefore $R(G, H') \leq R(G, H)$. The complement of a $(G,H)$-good graph is an $(H,G)$-good graph, hence $R(G, H)=R(H, G)$. This monotonicity and symmetry of 2-color Ramsey numbers extend to multiple colors.\\

In what follows we discuss Ramsey numbers for parameters between $(K_3, K_3, K_3)$ and $(K_4,K_4,K_4).$ In this range there are four classical Ramsey numbers $R(K_p, K_q, K_r)$ of which only one exact value $R(K_3, K_3, K_3)=17$ is known \cite{Rad}.
Arste, Klamroth, and Mengersen \cite{AKM} studied a variety
of 3-color Ramsey numbers
$R(G_1, G_2, G_3)$ for $G_i$'s on at most four vertices. Several of
the cases still unsolved fall within the $(K_3, K_3, K_3)$ to $(K_4, K_4, K_4)$
range. Figure \ref{fig:RFlow} below is presented as a poset of possible
parameters ordered coordinate-wise under inclusion for
$G_i\in  \{K_3, J_4, K_4\}$. The only two numbers known
in this range are $R(K_3, K_3, K_3)=R(K_3, K_3, J_4)=17$
(Theorem \ref{thm:33J}). For the open cases the best known
bounds are presented. The Ramsey numbers with at least one
parameter involving $K_3+e$ are studied in Section \ref{sec:K3}.

\begin{figure}[H]
\begin{center}

\begin{displaymath}
 \xymatrix{
 \nonumber & *\txt{$R(K_4,K_4,K_4)$\\ $128_{[12]} - 236$} \ar@{-}[d]\\
 \nonumber & \txt{$R(J_4,K_4,K_4)$\\ $55 - 121$} \ar@{-}[dl] \ar@{-}[dr]\\
 \txt{$R(J_4,J_4,K_4)$\\ $33^* - 60$} \ar@{-}[d] \ar@{-}[drr]& \nonumber  & \txt{$R(K_3,K_4,K_4)$\\ $55_{[15]} - 79$} \ar@{-}[d] \\
       \txt{$R(J_4,J_4,J_4)$\\ $28_{[7]} - 30_{[21]}$} \ar@{-}[d] & \nonumber  &  \txt{$R(K_3,J_4,K_4)$\\ $30 - 44$} \ar@{-}[dll] \ar@{-}[d]\\
        \txt{$R(K_3,J_4,J_4)$\\ $21^* - 27^*$} \ar@{-}[dr] & \nonumber &   \txt{$R(K_3,K_3,K_4)$\\ $30_{[14]} - 31_{[22][23]}$} \ar@{-}[dl] \\
       \nonumber &  \txt{$R(K_3,K_3,J_4)$\\ $17^*$}\ar@{-}[d]\\
       \nonumber &  \txt{$R(K_3,K_3,K_3)$\\ $17_{[10]}$}\\
       }
\end{displaymath}

\caption{Ramsey numbers for parameters between $(K_3, K_3, K_3)$ and $(K_4, K_4, K_4)$.
The results of this paper are marked with a *, and the bounds without references are
obtained by monotonicity or by application of
the standard upper bound (see 6.1.a in \cite{Rad})
to the bounds for smaller parameters in this figure or
to the results listed in \cite{AKM}.}
\label{fig:RFlow}
\end{center}
\end{figure}

\bigskip
\section{From $K_3$ to $K_3+e$}\label{sec:K3}
In the case of two colors, Burr, Erd\H{o}s,
Faudree, and Schelp \cite{BEFS} proved that,
for $m,n \geq 3$ and $m+n \geq 8$, $R({\widehat K}_{m,p},
\widehat{K}_{n,q})=R(K_m, K_n)$, where $p=\lceil m/(n-1) \rceil$,
$q=\lceil n/(m-1) \rceil$,
and ${\widehat K}_{k,l}=K_{k+1}-K_{1,k-l}$, the graph obtained
from a $K_k$ by adding a vertex adjacent to $l$ vertices in $K_k$.\\

\bigskip
For more colors, it has been proven that in some cases adding an edge to $K_3$ leaves Ramsey numbers unchanged, such as the following:

\bigskip
\begin{itemize}
\item $R_3(K_3+e)=R_3(K_3)=17$ \cite{YR3},
\item $R(K_3+e,K_3+e,K_4)=R(K_3,K_3,K_4)$ \cite{AKM}.
\end{itemize}

Several similar cases are presented in \cite{AKM}. We give further evidence of such behavior by establishing three new cases. This raises the question of when the parameter $K_3$ can be extended to $K_3+e$ without changing the Ramsey number.\\

In Theorem \ref{thm:33J} of the next section we will prove that $R(K_3,K_3,J_4)=17$.
This result will be used in the proof of the following Theorem \ref{thm:33Je}.

\bigskip
\begin{theorem}\label{thm:33Je}
$R(K_3, K_3, J_4) = R(K_3+e, K_3+e, J_4)$ $[=17]$.
\end{theorem}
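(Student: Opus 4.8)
The plan is to prove the two inequalities that together establish equality. Since $K_3$ is a subgraph of $K_3+e$, monotonicity immediately gives $R(K_3, K_3, J_4) \leq R(K_3+e, K_3+e, J_4)$. Combined with the already-established value $R(K_3, K_3, J_4)=17$ from Theorem \ref{thm:33J}, it remains only to prove the reverse inequality $R(K_3+e, K_3+e, J_4) \leq 17$, i.e.\ that $K_{17} \ra (K_3+e, K_3+e, J_4)$. The natural strategy is a proof by contradiction: assume there exists a $(K_3+e, K_3+e, J_4; 17)$-coloring, and derive a contradiction by showing it forces a $(K_3, K_3, J_4; 17)$-coloring to exist, which is impossible since $R(K_3, K_3, J_4)=17$.

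The key observation I would exploit is the structural difference between forbidding $K_3$ and forbidding $K_3+e$. A coloring avoids $K_3+e$ in a given color exactly when that color is triangle-free \emph{or} its only triangles are ``isolated'' in the sense that no triangle shares an edge with another edge of that color — more precisely, $K_3+e$ appears iff some triangle has an edge extended by a further edge of the same color. So a $(K_3+e)$-free color can contain triangles, but every triangle in it must be edge-disjoint from the rest of that color class (each edge of such a triangle lies in no other same-colored edge beyond the triangle). The first main step is therefore to analyze these sparse monochromatic triangles in the red and green colors of the hypothetical coloring on $17$ vertices. I would catalog each red triangle and each green triangle and examine the local structure around them, with the goal of recoloring the offending edges to eliminate all triangles in red and green while keeping blue free of $J_4$, thereby converting the coloring into a genuine $(K_3, K_3, J_4; 17)$-coloring.

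First I would set up the contradiction hypothesis and name a red or green triangle $T$ if one exists; if neither red nor green contains any triangle, the coloring is already $(K_3, K_3, J_4; 17)$-good and we are done. Assuming a triangle $T=\{x,y,z\}$ exists in, say, red, I would study how its vertices interact with the rest of $K_{17}$. The plan is to show that one edge of $T$ can be recolored (moved to green or blue) without creating a forbidden subgraph: recoloring to green must not create a green $K_3+e$, and recoloring to blue must not create a blue $J_4$. This requires a careful case analysis of the colored neighborhoods $N_c(x), N_c(y), N_c(z)$, using that the red triangle is edge-isolated and counting arguments on the degrees forced by having only three colors on $16$ remaining vertices at each vertex. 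Iterating this recoloring over all red and green triangles should terminate in a triangle-free red and green coloring.

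The hard part will be controlling the recoloring so that destroying one monochromatic triangle does not create a new $K_3+e$ elsewhere or a blue $J_4$; the interactions between nearby triangles and the blue constraint are where the delicate case analysis lives, and where I expect to need the fine structure of extremal or near-extremal $(K_3, K_3, J_4; 16)$-colorings. It is plausible the authors instead argue more cleverly — for instance, by showing directly that any red or green $K_3$ in a $(K_3+e, K_3+e, J_4; 17)$-coloring already forces enough blue edges to produce a blue $J_4$, giving an immediate contradiction without iterative recoloring. I would pursue both routes, but I expect the clean contradiction via forced blue $J_4$ structure (leveraging that $17$ vertices leave little room once two colors are nearly triangle-free) to be the most likely path to a short proof.
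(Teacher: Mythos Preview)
Your overall setup is right: monotonicity plus Theorem~\ref{thm:33J} reduce the task to deriving a contradiction from a hypothetical $(K_3+e,K_3+e,J_4;17)$-coloring containing, say, a red $K_3$ on $\{v_1,v_2,v_3\}$. But from that point your plan diverges from the paper's and misses the two ingredients that make the proof short.

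The paper does \emph{not} recolor anything. Instead it deletes the triangle and looks at the red graph $H$ on the remaining $14$ vertices. Because the red triangle is an isolated component of the red color (your own observation), every edge from $\{v_1,v_2,v_3\}$ to the other $14$ vertices is green or blue. Now two outside facts finish the job: (i) $R(K_3+e,K_5)=14$, so $H$, being $K_3+e$-free on $14$ vertices, must contain an independent set of size $5$; and (ii) Lemma~\ref{lem:J7}, $J_7\to(K_3+e,J_4)$. Those five red-independent vertices together with $v_1,v_2$ span a $J_7$ entirely in green and blue (the only missing edge is the red $\{v_1,v_2\}$), and Lemma~\ref{lem:J7} forces a green $K_3+e$ or a blue $J_4$ inside it. Contradiction.

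Your recoloring strategy, by contrast, is not yet a proof: you have not shown that some edge of the red triangle can always be moved to green or blue without creating a green $K_3+e$ or a blue $J_4$, and the obstacles are real. For instance, since each $v_i$ has $14$ green-or-blue edges to the outside, making $\{v_1,v_2\}$ green will create a green $K_3+e$ as soon as $v_1$ and $v_2$ share a green neighbor, or as soon as either already sits in a green triangle; making it blue runs into blue $J_4$'s just as easily. Controlling all of these simultaneously, and then iterating over all red and green triangles without reintroducing old ones, would require substantial case analysis that you have not supplied. The paper sidesteps all of this with the $J_7$ arrowing lemma and the known value $R(K_3+e,K_5)=14$; those are the key ideas you are missing.
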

\begin{proof}
By Theorem \ref{thm:33J} and monotonicity
of Ramsey numbers we have that
$17=R(K_3, K_3, J_4) \leq R(K_3+e, K_3+e, J_4)$.
Assume towards a contradiction that
$R(K_3, K_3, J_4) < R(K_3+e, K_3+e, J_4)$, and let
$G$ be a $(K_3+e, K_3+e, J_4; 17)$-coloring.
We may assume without loss of generality that there is a red
$K_3$ in $G$ with vertices $\{v_1, v_2, v_3\}$. Let the graph $H$ be the red component of $G$ induced by $V(G) \setminus \{v_1, v_2, v_3\}$. Clearly $H$ contains no $K_3+e$. Also, $H$ cannot contain a $\overline{K_5}$, since otherwise
together with $\{v_1, v_2\}$ it would span
a green and blue ${J_7}$ in $G$. By Lemma \ref{lem:J7} of the next
section, $J_7 \ra (K_3+e, J_4)$, which is a contradiction. So $H$
is a $(K_3+e, K_5; 14)$-good graph, which is impossible since
$R(K_3+e, K_5)=14$ \cite{Clan}.
\end{proof}

In the known non-trivial cases it appears that extending the parameter
$K_3$ to $K_3+e$ does not change Ramsey numbers. Irving \cite{Ir} stated
that for $k>2$, it seems likely that $R_k(K_3+e) = R_k(K_3)$. The following
theorem may add credence to or disprove this statement. It is known that
$51 \leq R_4(K_3) \leq 62$ \cite{Chu1}\cite{FKR}.

\bigskip
\begin{theorem} \label{thm:R4(3)}
\ \\
$(a)$ If $R_4(K_3)=51$, then $R_4(K_3+e)=R(K_3, K_3, K_3, K_3+e)=52$, and \\
$(b)$ If $R_4(K_3)>51$, then $R_4(K_3+e)=R_4(K_3)$.
\end{theorem}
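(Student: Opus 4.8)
The plan is to sandwich $R_4(K_3+e)$ between $R_4(K_3)$ from below and $R_4(K_3)+1$ from above, and then to show that the upper value is forced down to $R_4(K_3)$ unless $R_4(K_3)=51$. First I would record the easy ingredients. By monotonicity, since $K_3 \subseteq K_3+e$,
\[
R_4(K_3) \le R(K_3,K_3,K_3,K_3+e) \le R_4(K_3+e),
\]
so it suffices to bound $R_4(K_3+e)$ from above and to supply a single construction. For the lower bound $R_4(K_3+e) \ge R_4(K_3)$, observe that a $(K_3,K_3,K_3,K_3)$-coloring of $K_{R_4(K_3)-1}$ contains no monochromatic triangle at all, hence no monochromatic $K_3+e$, so it is already a $(K_3+e)^4$-coloring.

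The heart of the argument is the following structural lemma, which I would prove first: \emph{every $(K_3+e)^4$-coloring of $K_n$ that contains at least one monochromatic triangle satisfies $n \le 51$.} The point is that in a coloring with no monochromatic $K_3+e$, any monochromatic triangle must be an isolated $K_3$ component in its color, since if one of its three vertices had a further edge of that color a $K_3+e$ would appear. So take such a triangle $\{a,b,c\}$, say in red; then $a$ has red-degree exactly $2$, and its remaining $n-3$ edges are distributed among the other three colors. For each such color $c'$ we get $\deg_{c'}(a) \le 16$: this is trivial if $\deg_{c'}(a)\le 2$, while if $\deg_{c'}(a)\ge 3$ then $N_{c'}(a)$ must be independent in color $c'$ (otherwise $a$, together with a $c'$-edge inside $N_{c'}(a)$ and a third $c'$-neighbour, spans a $c'$-colored $K_3+e$), so $N_{c'}(a)$ carries a $3$-coloring avoiding $K_3+e$ and hence $\deg_{c'}(a)=|N_{c'}(a)| \le R_3(K_3+e)-1 = 16$. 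Summing over the three colors gives $n-3 \le 48$, i.e.\ $n \le 51$.

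With the lemma in hand the upper bound is immediate. Any $(K_3+e)^4$-coloring either has no monochromatic triangle --- in which case it is a $(K_3)^4$-coloring and so has at most $R_4(K_3)-1$ vertices --- or it has one, in which case the lemma caps it at $51$ vertices. Thus every such coloring has at most $\max\{R_4(K_3)-1,\,51\}$ vertices, giving $R_4(K_3+e) \le \max\{R_4(K_3),\,52\}$. When $R_4(K_3)\ge 52$ this reads $R_4(K_3+e)\le R_4(K_3)$, which with the lower bound proves $(b)$ (and collapses the intermediate number there as well). When $R_4(K_3)=51$ it reads $R_4(K_3+e)\le 52$, and since $R_4(K_3)=51 \le R(K_3,K_3,K_3,K_3+e)$, it remains only to exhibit one $(K_3,K_3,K_3,K_3+e)$-coloring of $K_{51}$: such a coloring is a fortiori a $(K_3+e)^4$-coloring, so it pushes both $R(K_3,K_3,K_3,K_3+e)$ and $R_4(K_3+e)$ up to $52$ and pins the whole chain at $52$.

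Producing that $K_{51}$ coloring is the step I expect to be the genuine obstacle, and it is exactly where the hypothesis $R_4(K_3)=51$ enters. The extremal case of the lemma is tight precisely at $n=51$: a red triangle one of whose vertices has three color-neighbourhoods, each a triangle-free $3$-coloring of $K_{16}$. My plan is to realize this tight configuration concretely --- start from a $(K_3)^4$-coloring of $K_{50}$ guaranteed by $R_4(K_3)=51$, adjoin a $51$st vertex whose three color-classes are chosen to be independent sets (keeping those colors triangle-free) while the fourth color absorbs only an isolated triangle (so it stays $K_3+e$-free). The delicate part is arranging the required independent partition of the new vertex's edges against a fixed base coloring, which cannot be pushed to all four colors (that would contradict $R_4(K_3)=51$); I would either force it from the rigid structure the lemma imposes at $n=51$ or verify a suitable base coloring computationally, and this is the single place where a clean purely combinatorial argument may need computational support.
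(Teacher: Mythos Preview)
Your upper-bound argument is essentially the paper's: a vertex $v$ of a monochromatic triangle has exactly two edges of that color, and each of the other three color-neighbourhoods of $v$ is free of that color and hence has size at most $R_3(K_3+e)-1=16$, giving $n\le 51$. The paper phrases this via pigeonhole on one neighbourhood rather than summing all three, but the content is identical, and part $(b)$ follows just as you say.

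The gap is exactly where you flag it: the lower-bound construction for part $(a)$. You outline the right shape (extend a $(K_3)^4$-coloring of $K_{50}$ by one vertex so that only an isolated monochromatic triangle appears) but leave the choice of base coloring and the extension unspecified, suggesting it may need computation. In fact no computation is needed, but one must pick a \emph{specific} $K_{50}$ coloring; an arbitrary one guaranteed by $R_4(K_3)\ge 51$ need not admit such an extension. The paper uses Chung's coloring $C_{50}$, whose structure does the work for free: its vertex set is $R\cup G\cup B\cup\{x,y\}$ with $|R|=|G|=|B|=16$, the edge $\{x,y\}$ is yellow, and $x,y$ are joined to $R$ in red, to $G$ in green, to $B$ in blue, while each of $R,G,B$ internally carries a $(K_3,K_3,K_3;16)$-coloring missing the matching color. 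Adding a vertex $z$ wired identically to $x$ and $y$ (red to $R$, green to $G$, blue to $B$, yellow to $x$ and $y$) creates no red, green, or blue triangle through $z$, since $R,G,B$ contain no edges of their own color; the only new monochromatic triangle is the yellow $\{x,y,z\}$, and it is isolated in yellow because $x,y,z$ have no other yellow edges. That single observation is the missing piece; with it your proposal becomes the paper's proof.
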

\begin{proof}
Suppose $H$ is a $(K_3+e, K_3+e, K_3+e, K_3+e;n)$-coloring for some $n \geq R_4(K_3)$ and $n\geq52$. Then we may assume without loss of generality that $H$ contains a red $K_3$. Let $v$ be a vertex of this $K_3$, then we may also assume that $|N_g(v)| \geq \lceil (n-3)/3 \rceil\geq17$.
Note that the green color cannot occur in $N_g(v)$.
However, $N_g(v)$ induces a $(K_3+e, K_3+e, K_3+e)$-coloring,
and since $R_3(K_3+e)=17$ \cite{YR3}, $|N_g(v)| \leq 16$. This gives rise to a contradiction, and thus proves $(b)$ and the upper bound for $(a)$. What remains to be shown is the lower bound in $(a)$.

We construct a $(K_3, K_3, K_3, K_3+e; 51)$-coloring $C_{51}$ by extending the well known Chung $(K_3, K_3, K_3, K_3; 50)$-coloring $C_{50}$ \cite{Chu1}. Partition the set of vertices of $C_{50}$ as $V=R \cup G \cup B \cup \{x,y\}$, where $|R|=|G|=|B|=16$. The edge $\{x,y\}$ is yellow, edges in $\{\{x,v\},\{y,v\} : v \in R\}$ are red, edges in $\{\{x,v\}, \{y,v\} : v \in G\}$ are green, and edges in $\{\{x,v\},\{y,v\}:v\in B\}$ are blue. Each of $R,G,$ and $B$ induces a
$(K_3,K_3,K_3;16)$-coloring, where the first has no red edges, the second
no green edges, and the third no blue edges. Chung also described a way to color the edges between $R,G$, and $B$ without forming a monochromatic $K_3$. We omit the details as they are irrelevant to our proof. The additional vertex $z$ is connected to $R,G$, and $B$ in the same way as $x$ and $y$, and the edges $\{x,z\}$ and $\{y,z\}$ are yellow. This $C_{51}$ on the vertex set $V \cup \{z\}$ has exactly one monochromatic $K_3$, namely an isolated yellow $K_3$ on $\{x,y,z\}$.
Thus, easily,
$C_{51}$ is a $(K_3, K_3, K_3, K_3+e; 51)$-coloring.
By the monotonicity of Ramsey numbers, $(a)$ follows.
\end{proof}

We close this section with a case where a similar but
unconditional equality can be proven even when the Ramsey
number is unknown, namely for the case
$30\leq R(K_3,K_3,K_4)\leq31$ \cite{Ka2}\cite{PR1}\cite{PR2}.
Our next theorem improves on the old result that
$R(K_3, K_3, K_4) = R(K_3+e, K_3+e, K_4)$ \cite{YR3}\cite{AKM}.
In the following, $P_k$ will denote a path on $k$ vertices.

\medskip
\begin{theorem} \label{thm:R(334)}
$R(K_3, K_3, K_4) = R(K_3+e, K_3+e, K_5-P_3)$.
\end{theorem}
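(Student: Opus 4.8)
The plan is to prove the two inequalities separately and to route the hard direction through the known equality $R(K_3,K_3,K_4)=R(K_3+e,K_3+e,K_4)$ \cite{YR3}\cite{AKM}, so that the only new phenomenon to control is the growth of the third (blue) parameter from $K_4$ to $K_5-P_3$. For the lower bound, since $K_3\subseteq K_3+e$ and $K_4\subseteq K_5-P_3$ (the four vertices of $K_5-P_3$ other than the degree-$2$ vertex span a $K_4$), monotonicity immediately gives $R(K_3,K_3,K_4)\le R(K_3+e,K_3+e,K_5-P_3)$. For the upper bound I would set $n=R(K_3,K_3,K_4)=R(K_3+e,K_3+e,K_4)\ge 30$ and assume, towards a contradiction, that a $(K_3+e,K_3+e,K_5-P_3;n)$-coloring $G$ exists. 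Since red and green forbid $K_3+e$ in both the hypothetical coloring and in a $(K_3+e,K_3+e,K_4)$-coloring, it suffices to show that $G$ contains no blue $K_4$: then $G$ is a $(K_3+e,K_3+e,K_4;n)$-coloring, contradicting $K_n\ra(K_3+e,K_3+e,K_4)$.

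Next I would record two structural facts. First, because red avoids $K_3+e$, every red triangle is an isolated red component; equivalently, if $|N_r(v)|\ge 3$ then $N_r(v)$ spans no red edge, and likewise for green. Second, because blue avoids $K_5-P_3$, and $K_5-P_3$ is exactly a $K_4$ together with a fifth vertex joined to two of its vertices, any blue $K_4$ on a set $Q$ has the property that every vertex outside $Q$ sends \emph{at most one} blue edge into $Q$.

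The core step is then a counting-plus-Ramsey argument. Suppose blue contained a $K_4$ on $Q=\{w,x,y,z\}$. Each of the $n-4\ge 26$ outside vertices has at most one blue edge to $Q$, hence at least three red-or-green edges to $Q$, so the number of red-or-green edges between the outside and $Q$ is at least $3\cdot 26=78$. By pigeonhole some vertex of $Q$, say $w$, receives at least $\lceil 78/4\rceil=20$ such edges, and therefore has at least $10$ neighbors in one of the two colors; say $|N_r(w)|\ge 10$. Since $w\in Q$ has no red edge inside $Q$, the set $S=N_r(w)$ lies outside $Q$, and since $|N_r(w)|\ge 3$ it spans no red edge, so on $S$ only green and blue occur. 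As $G$ has no green $K_3+e$ and no blue $K_5-P_3$, the coloring induced on $S$ is a $(K_3+e,K_5-P_3)$-good graph on at least $10$ vertices, contradicting $R(K_3+e,K_5-P_3)=10$. Hence no blue $K_4$ exists, and the reduction above completes the proof.

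The hard part will be the two-color input $R(K_3+e,K_5-P_3)=10$, i.e. the arrowing $K_{10}\ra(K_3+e,K_5-P_3)$, which is what pays for the gap between $K_4$ and $K_5-P_3$ and makes the pigeonhole count tight (note $\lceil 3\cdot 26/4\rceil/2=10$ exactly at $n=30$, so the argument has no slack). To prove it I would translate blue-$(K_5-P_3)$-freeness into the statement that among every five vertices the red (non-blue) edges do not form a subgraph of $P_3$ (its two non-edges form a $P_3$, and $\alpha(K_5-P_3)=2$), and combine this with red-$K_3+e$-freeness, under which red is a disjoint union of isolated triangles together with a triangle-free graph. A short analysis of which such red graphs can avoid a ``bad'' five-set then forces at most nine vertices, with three disjoint red triangles realizing the extremal $(K_3+e,K_5-P_3)$-good coloring on $9$ vertices; carrying out this finite bound cleanly, rather than by machine, is the main obstacle.
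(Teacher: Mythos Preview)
Your proof is correct and follows essentially the same approach as the paper: both invoke $R(K_3+e,K_3+e,K_4)=R(K_3,K_3,K_4)$ to force a blue $K_4$ in a hypothetical $(K_3+e,K_3+e,K_5-P_3;n)$-coloring, then combine the observation that outside vertices send at most one blue edge into that $K_4$ with the cited value $R(K_3+e,K_5-P_3)=10$ to reach a contradiction. The only cosmetic difference is the direction of the pigeonhole---the paper bounds $|N_r(v_i)|,|N_g(v_i)|\le 9$ for each vertex of the blue $K_4$ and forces two overlapping blue neighborhoods, whereas you count red/green edges into $Q$ to find a single $|N_c(w)|\ge 10$---but the two counts are dual and rely on identical ingredients (and you may simply cite $R(K_3+e,K_5-P_3)=10$ from \cite{Clan} rather than reprove it).
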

\begin{proof}
Let $n=R(K_3, K_3, K_4)$, and assume towards a contradiction that $G$ is a $(K_3+e, K_3+e, K_5-P_3; n)$-coloring. By the remarks above we know that $30 \leq n \leq 31$. There is a blue $K_4$ in $G$, let its vertices be $\{v_1, v_2, v_3, v_4\}$. If $c$ is red or green and $|N_c(v_i)|>2$, then $N_c(v_i)$ induces a $(K_3+e, K_5-P_3)$-good graph.  Since $R(K_3+e, K_5-P_3)=10$ \cite{Clan}, then
in both cases
$N_c(v_i)$ has order at most $9$. Let $N_i=N_b(v_i) \setminus \{v_1, v_2, v_3, v_4\}$, then $|N_i| \geq (n-4)-2\cdot9 \geq 8$. With four such $N_i$'s covering $n-4$ vertices, some vertex $v$ must be contained in at least $2$ of them. Then $\{v, v_1, v_2, v_3, v_4\}$ forms a blue $K_5-P_3$ in $G$.
\end{proof}

\section{Ramsey Number $R(K_3,K_3,J_4)$} \label{sec:33J4}

The smallest open case for complete graphs in Figure \ref{fig:RFlow} is $R(K_3,K_3,K_4)$, of which the current bounds of $30$ and $31$ have not been improved since 1998 \cite{PR1}. Obtaining the exact value has continued to remain beyond the reach of computational methods. In this section we prove that the Ramsey number $R(K_3, K_3, K_4 - e)$ is equal to 17, considering it as an intermediate step between $R(K_3,K_3,K_3)$ and the solution to the elusive $R(K_3,K_3,K_4)$. The proof of $R(K_3,K_3,J_4)=17$
needs some lemmas, which are then used in two different computational approaches.

\begin{lemma} \label{lem:hex}
Every $(K_3+e,J_4;6)$-good graph contains a $C_6$ or $G=2K_3$.
\end{lemma}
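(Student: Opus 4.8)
The plan is to translate the two requirements on a $(K_3+e,J_4)$-good graph $G$ (viewed as the red graph, with $\overline{G}$ the green one) into structural statements and then split on whether $G$ is triangle-free. First I would record two reformulations. Because the red color avoids $K_3+e$, every triangle of $G$ must be an isolated $K_3$: if a triangle vertex had a neighbor outside the triangle, those four vertices would carry a red $K_3+e$. Because the green color avoids $J_4$, every four vertices must span at least two edges of $G$; otherwise at most one red edge among them would leave at least five green edges, i.e.\ a green $J_4$. I will refer to this last property as $(\ast)$.

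If $G$ contains a triangle $T=\{a,b,c\}$, then by the first reformulation $T$ is an isolated component, so $a,b,c$ send no edge to the remaining vertices $\{d,e,f\}$. Applying $(\ast)$ to $\{a,b,d,e\}$, $\{a,b,d,f\}$ and $\{a,b,e,f\}$ --- each of which already contains the edge $ab$ and no edge incident to $a$ or $b$ other than $ab$ --- forces $de$, $df$ and $ef$ all to be red. Thus $\{d,e,f\}$ is a second triangle, necessarily isolated as well, and there are no further edges, so $G=2K_3$.

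It remains to treat the triangle-free case and produce a $C_6$. Here I would first pin down the degrees. By $(\ast)$ there is no independent set of size $4$, so $\alpha(G)\le 3$; since in a triangle-free graph the neighborhood of any vertex is independent, this already gives maximum degree at most $3$. For the lower bound, a degree-$0$ vertex would force every triple of the other five vertices to carry exactly two edges (between $\ge 2$ from $(\ast)$ and $\le 2$ from triangle-freeness), which fails a simple edge count; and a degree-$1$ vertex $v$ would, by the same count on the triples avoiding $v$, force its four non-neighbors to induce a $C_4$, after which $(\ast)$ applied to the two four-sets through $v$ and its neighbor $w$ that use a diagonal of this $C_4$ would make $w$ adjacent to two consecutive $C_4$-vertices, creating a triangle. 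Hence every degree is $2$ or $3$, and a similar analysis (a small component or a cut vertex would force a triangle or a vertex of degree below $2$) shows $G$ is $2$-connected.

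With all degrees in $\{2,3\}$ the number of degree-$3$ vertices is even, so there are four cases. When it is $0$, $G$ is $2$-regular and triangle-free, hence $C_6$; when it is $6$, $G$ is cubic and triangle-free, hence $K_{3,3}$, which contains a $C_6$. The two middle cases (seven and eight edges) are the crux: I would show that every non-Hamiltonian triangle-free graph on these degree sequences contains two vertices with the same two-element neighborhood --- equivalently a four-set spanning at most one edge --- which is exactly what $(\ast)$ forbids; what survives is only ``$C_6$ plus an antipodal chord'' together with the one-edge deletions of $K_{3,3}$, all of which contain $C_6$. The hard part will be this last step: the reduction to degrees $\{2,3\}$ is clean, but verifying that property $(\ast)$ detects every triangle-free obstruction to a spanning cycle (via some sparse four-set) requires a short but careful check of the two intermediate degree sequences.
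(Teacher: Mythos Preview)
Your proposal is correct and reaches the same conclusion, but after the common first step it takes a genuinely different route from the paper.

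Both arguments begin identically: a triangle in $G$ must be an isolated component (else a red $K_3+e$), and then $(\ast)$ forces the remaining three vertices to form a second isolated triangle, giving $2K_3$. From here the paper assumes $G$ is $C_6$-free and triangle-free and analyses the \emph{longest path}: it treats the $P_6$ case explicitly (showing any attempt to avoid a $\overline{J_4}$ forces a $C_6$ or a triangle) and leaves the shorter-path cases to the reader. You instead extract the \emph{degree sequence}: $(\ast)$ gives $\alpha(G)\le 3$, hence $\Delta(G)\le 3$; your two nice counting arguments rule out degrees $0$ and $1$; and you then split on the number of degree-$3$ vertices.

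Each approach offloads comparable work. The paper's ``$P_k$, $k<6$'' step is real casework, just suppressed; your ``hard part'' is the two middle degree sequences, and it is indeed short. For $(3,3,2,2,2,2)$ one splits on whether the two degree-$3$ vertices are adjacent: if they are, the graph is forced to be $C_6$ with an antipodal chord; if not, it is forced to be $C_5$ with a pendant vertex attached to two non-adjacent $C_5$-vertices, and the two common neighbours witness the repeated $2$-neighbourhood you predicted. For $(3,3,3,3,2,2)$, an edge count through a putative induced $C_5$ (at most $5+2=7$ edges, versus the required $8$) shows the graph is bipartite, hence $K_{3,3}-e$, which is Hamiltonian. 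So both intermediate cases behave exactly as you claim.

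One minor remark: your $2$-connectedness step, while true, is not actually needed for the subsequent case analysis (the degree constraints alone already force the enumeration in each case), so you could drop it and streamline slightly.
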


\begin{proof}
Suppose that $G$ is a $C_6$-free $(K_3+e,J_4;6)$-good graph with vertices $\{v_1,v_2,v_3,v_4,v_5,v_6\}$ that is not equal to $2K_3$. First, we show that $G$ cannot contain a $K_3$: assume $\{v_1, v_2, v_3\}$ forms a $K_3$. Then for $i \in \{1, 2, 3\}$ and $j \in \{4, 5, 6\}$, $\{v_i, v_j\}$ cannot be an edge. This means that $\{v_4, v_5, v_6\}$ must induce a $K_3$ to avoid a $\overline{J_4}$, and the resulting graph is equal to $2K_3$.

We can now assume that $G$ is a $(K_3,J_4)$-good graph.  Next, we consider the cases with respect to the length of the longest path in $G$ as follows.

\begin{description}
 \item[($P_6$)] Assume that the longest path is $P_6=v_1v_2v_3v_4v_5v_6$.
To prevent $\{v_1,v_3,v_4,v_6\}$ from forming a $\overline{J_4 }$,
the graph $G$ must contain either edge $\{v_1,v_4\}$ or $\{v_3,v_6\}$, since any other additional edge would form a $C_6$ or $K_3$. Thus, without loss of generality, we can assume that $\{v_3,v_6\}\in E(G)$. Similarly, $G$ must contain an additional edge in $\{v_1,v_2,v_4,v_6\}$, otherwise there is a $\overline{J_4}$. Any such edge forms a $K_3$ or a $C_6$, which is a contradiction.
 
 \item[($P_k)$] Assume that the longest path $P$ in $G$ is of length $k<6$. By considering one or two forbidden $\overline{J_4}$'s, it can be shown that $P$ together with additional edges would contain $P_{k+1}$, leading to a contradiction. We leave the details for the reader to verify.
\end{description}

\end{proof}

\begin{lemma} \label{lem:J7}
$J_7 \ra (K_3+e,J_4)$.
\end{lemma}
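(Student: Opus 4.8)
The plan is to assume for contradiction a good coloring of $J_7$ (no red $K_3+e$, no blue $J_4$) with missing edge $\{u,v\}$, and to reduce to Lemma \ref{lem:hex} by deleting the endpoints of that edge. Two local facts will drive everything. First, blue $J_4$-freeness is equivalent to the statement that for every vertex $x$ the blue neighborhood $N_b(x)$ induces a blue matching: two blue edges of $N_b(x)$ sharing a vertex $y$ would give two blue triangles on the edge $xy$, i.e.\ a blue $J_4$. Second, if $x$ lies in no red triangle then $N_r(x)$ spans no red edge, so $N_r(x)$ induces only blue edges; since four vertices joined only by blue edges (with at most one pair missing) already contain a blue $J_4$, this forces $|N_r(x)| \le 3$. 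Moreover any red triangle must be an isolated red $K_3$ component, because a red edge leaving a red triangle is a red $K_3+e$. Hence $r(x) = |N_r(x)| \le 3$ for every vertex $x$, so $|E_R| \le 10$ and the blue graph has at least $10$ edges.

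Since $J_7 = K_7$ with the single edge $\{u,v\}$ removed, both $J_7 - u$ and $J_7 - v$ are isomorphic to $K_6$ and inherit $(K_3+e, J_4; 6)$-good colorings, so Lemma \ref{lem:hex} applies to each: the red graph of each contains a $C_6$ or equals $2K_3$. Suppose first that one of them, say the red graph of $J_7 - u$, equals $2K_3$, giving two disjoint red triangles $T_1, T_2$ on $V \setminus \{u\}$ with all edges between them blue. Reinserting $u$, no edge from $u$ to $T_1 \cup T_2$ can be red, as it would extend a red triangle to a red $K_3+e$; thus every present edge at $u$ is blue, and since $u$ misses only $v$ it is blue-adjacent to two vertices $p,q$ of some one triangle and to at least one vertex $a$ of the other. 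Then $pq$ is red while $up, uq, ua, ap, aq$ are all blue, so $\{u,p,q,a\}$ spans a blue $J_4$, a contradiction. We may therefore assume the red graphs of both $J_7 - u$ and $J_7 - v$ contain a spanning $C_6$.

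It remains to treat the case where red contains a spanning $6$-cycle $A$ on $V \setminus \{u\}$ and a spanning $6$-cycle $B$ on $V \setminus \{v\}$. Here red is triangle-free, since every cycle vertex has red degree at least $2$ inside a connected $6$-cycle and so cannot lie in an isolated red $K_3$, while by the second local fact any red triangle would have to be isolated. Deleting $v$ from $A$ and $u$ from $B$ leaves two Hamiltonian paths $P_A, P_B$, each with four edges, on the five common vertices $C = V \setminus \{u,v\}$. Each of $u,v$ keeps its two cycle-neighbors in $C$, giving at least four red edges between $C$ and $\{u,v\}$; counting red endpoints at $C$ together with $r(x) \le 3$ for $x \in C$ yields $20 - 2|E(P_A) \cap E(P_B)| \le \sum_{x \in C} r(x) \le 15$, so $P_A$ and $P_B$ share at least three of their four edges. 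This pins the red graph down to a very short list of near-coincident configurations. The main obstacle, and the last step of the proof, is to examine each of these configurations and exhibit a blue $J_4$ in its complement; the matching condition on blue neighborhoods, together with the blue graph having at least $10$ edges on only $7$ vertices, should make such a diamond unavoidable. I expect this final bookkeeping to be the only genuinely delicate part, since everything preceding it is forced.
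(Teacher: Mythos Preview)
Your setup through the $2K_3$ case is correct, but the $C_6$ case is left genuinely unfinished: you reduce to two near-coincident Hamiltonian paths on $C$ sharing at least three edges and then stop, saying you ``expect'' the remaining bookkeeping to work. That is the gap. Even with $|E(P_A)\cap E(P_B)|\ge 3$ there are several placements of the endpoints of $P_A,P_B$ and of the extra red edges still permitted by $r(x)\le 3$, and you have not shown that every such placement forces a blue $J_4$. The counting that precedes it is sound, but it does not by itself locate a blue diamond; and your global observations (the matching condition on blue neighborhoods, $|E_B|\ge 10$) are not enough on their own either, since a $J_4$-free blue graph on $7$ vertices can certainly have $10$ edges.

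The paper's argument is shorter and sidesteps this case analysis entirely. It applies Lemma~\ref{lem:hex} only once, to $G-x$ where $\{x,y\}$ is the missing edge, and then exploits the fact that $y$ itself sits on the red $C_6=v_1v_2v_3v_4v_5y$. The two parity classes $\{v_1,v_3,v_5\}$ and $\{v_2,v_4,y\}$ are red-independent. Looking at $\{v_2,v_4,x,y\}$, the five present edges (remember $xy$ is absent) are all blue unless one of $xv_2,xv_4$ is red; say $xv_2$ is red. Then in $\{v_1,v_3,v_5,x\}$ at least two of $xv_1,xv_3,xv_5$ must be red to avoid a blue $J_4$, but any such pair, together with $xv_2$ and the adjacent cycle edges, creates a red $K_3+e$. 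Two four-sets, no path-overlap counting. If you want to salvage your route, the quickest fix is to drop the second application of Lemma~\ref{lem:hex} and argue directly with these two parity-class $4$-sets once you have a single red $C_6$ through~$v$.
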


\begin{proof}
Suppose that there is a coloring $C$ of $J_7=K_7 \setminus \{x,y\}$ witnessing the contrary. Let $G$ be the graph formed by the edges of the first color of $C$. Then $G$ contains no $K_3+e$ and $\overline{G} \setminus \{x,y\}$ contains no $J_4$. Further, $G - x$ is a $(K_3+e,J_4;6)$-good graph, and by Lemma \ref{lem:hex} it contains a $C_6$ or is equal to $2K_3$.

First assume that
$G - x$ contains a $C_6 = v_1v_2v_3v_4v_5y$ as shown in Figure \ref{fig:hexProof}.
Note that $\{v_1,v_3,v_5\}$ and $\{v_2,v_4,y\}$ are independent sets.
To avoid $J_4$ in $\overline{G} \setminus \{x,y\}$
on vertices $\{v_2,v_4,x,y\}$,
the graph $G$ must contain at least one of $\{x,v_2\}$ or $\{x,v_4\}$.
Without loss of generality assume that $\{x,v_2\}$ is in the graph.
Now to avoid $\overline{J_4}$ on the set $S=\{v_1,v_3,v_5,x\}$, $G$ must contain
at least two edges with both endpoints in $S$. However, any two such edges would complete a $K_3+e$, a contradiction. On the other hand, suppose $G - x = 2K_3$.
Any edge from $x$ to $2K_3$ would form a $K_3+e$,
so all $6$ edges must be in $\overline {G}$, but this leads to a
$J_4$ in $\overline{G} \setminus \{x,y\}$.
\end{proof}

The above shows that $J_7 \ra (K_3+e,J_4)$. Note that this also easily implies $R(K_3+e,J_4)=7$.

\begin{figure}[h]
\begin{center}
\includegraphics[scale=.28]{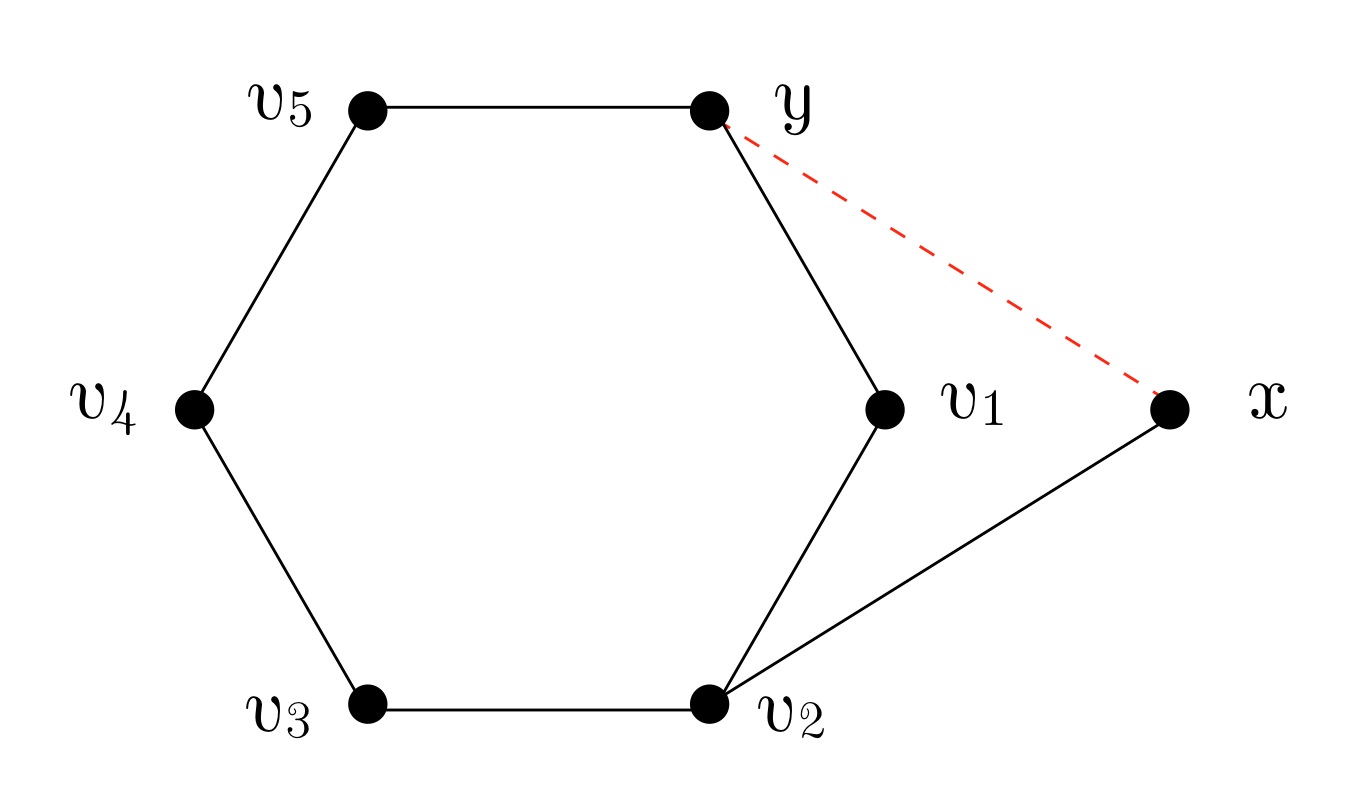}
\caption{$J_7 \ra (K_3+e,J_4)$}
\label{fig:hexProof}
\end{center}
\end{figure}

We will call a graph $G$ \emph{unsplittable} if $G \ra (K_3, J_4)$,
otherwise $G$ is \emph{splittable}. Our approach to obtain
$(K_3,K_3,J_4)$-colorings is based on Lemma \ref{lem:J7}
(here, a weaker arrowing $J_7 \ra (K_3,J_4)$ would suffice),
which implies that all such colorings can be produced from
a splittable $(J_7, K_3)$-good graph.

\begin{lemma} \label{lem:split}
If $m$ is the largest order of all  splittable $(J_7,K_3)$-good graphs, then
$R(K_3, K_3, J_4) = m + 1.$
\end{lemma}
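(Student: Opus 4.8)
The plan is to establish that, for every $n$, a $(K_3,K_3,J_4;n)$-coloring exists if and only if a splittable $(J_7,K_3)$-good graph of order $n$ exists. Since deleting a vertex preserves both properties, the largest $n$ admitting a coloring equals $m$, and $R(K_3,K_3,J_4)$ is exactly one more than that largest value; so the equivalence yields $R(K_3,K_3,J_4)=m+1$ directly. I would therefore set up the two directions as a back-and-forth between a $3$-coloring and the two-step description ``split off a triangle-free color, then split the remaining graph.''

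For the forward direction, I would start from a $(K_3,K_3,J_4;n)$-coloring with color classes $R$ (red), $G$ (green), and $B$ (blue), and set $H=R\cup B$, the complement of the green graph. Because $G$ is triangle-free, $\overline{H}=G$ contains no $K_3$, so the only thing left to verify is that $H$ itself contains no $J_7$. This is the crux, and it is precisely where Lemma~\ref{lem:J7} is used: if $H$ contained a $J_7$, then restricting the red/blue coloring to those seven vertices would give a $2$-coloring of $J_7$ with no $K_3$ in red and no $J_4$ in blue, contradicting the (weaker) consequence $J_7\ra(K_3,J_4)$ of Lemma~\ref{lem:J7}. Hence $H$ is a $(J_7,K_3)$-good graph, and its partition into the edge sets $R$ and $B$ witnesses $H\not\ra(K_3,J_4)$, so $H$ is splittable. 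This forces $n\le m$ and gives $R(K_3,K_3,J_4)\le m+1$.

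For the reverse direction, which is the same construction read backwards and needs no deep input, I would take a splittable $(J_7,K_3)$-good graph $H$ of order $m$. Since $\overline{H}$ contains no $K_3$, I color it green; since $H$ is splittable, its edges admit a $2$-coloring avoiding $K_3$ in the first color and $J_4$ in the second, which I call red and blue. The resulting $3$-coloring of $K_m$ has no red $K_3$, no green $K_3$, and no blue $J_4$, i.e.\ it is a $(K_3,K_3,J_4;m)$-coloring, so $R(K_3,K_3,J_4)\ge m+1$. Combining the two inequalities yields the equality.

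The main obstacle is the single nontrivial step in the forward direction, namely confirming that $H$ avoids $J_7$; everything else is bookkeeping about which color plays which role. I would be careful to state the correspondence as an equivalence at each fixed order $n$, rather than proving only one inequality, so that matching the maximum orders on the two sides delivers the exact value $m+1$ and not merely a bound.
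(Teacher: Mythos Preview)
Your proposal is correct and follows essentially the same approach as the paper's proof: both directions set up the bijection between $(K_3,K_3,J_4;n)$-colorings and splittable $(J_7,K_3;n)$-good graphs, invoking Lemma~\ref{lem:J7} (in its weaker form $J_7\ra(K_3,J_4)$) for the $J_7$-freeness step. The only cosmetic difference is that you take the complement of the green color while the paper takes the complement of red, which is immaterial by the symmetry of the first two parameters.
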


\begin{proof}
By Lemma \ref{lem:J7}, the complement of the red subgraph
(union of green and blue subgraphs) of any $(K_3, K_3, J_4; n)$-coloring
is a splittable $(J_7, K_3; n)$-good graph. This shows
$R(K_3, K_3, J_4) \leq m + 1$. The edges of the complement
of a splittable graph $G$ of order $m$ give the red part of
a $(K_3, K_3,J_4; m)$-coloring, while any witness to the
splittability of $G$ defines the other two colors.
This shows $R(K_3, K_3, J_4) \geq m + 1$.
\end{proof}

Using the argument in the proof of Lemma \ref{lem:split} we can construct all $(K_3,K_3,J_4;n)-$colorings by splitting every $(J_7, K_3;n)$-good graph. The full set $\mathcal{R}(K_3, J_7)$ has been enumerated \cite{K3J7comp}\cite{MPR}, and $R(K_3, J_7)=21$ \cite{GH}. We independently computed $\mathcal{R}(K_3,J_7)$ using a simple vertex by vertex extension algorithm that generates $\mathcal{R}(K_3, J_7;n+1)$ from $\mathcal{R}(K_3, J_7;n)$, and utilizes the program {\em nauty} \cite{McK}\cite{McK2} to eliminate graph isomorphs. Our results agreed exactly with previously reported data shown in Table 1.\\

\begin{center}
  \begin{tabular}{@{} |r|r|r|@{}}
    \hline
    $n$ & $|\mathcal{R}(K_3,J_7;n)|$ & \#edges\\
    \hline
    1 & 1 & 0\\
    2 & 2 & 0-1\\
    3 & 3 & 0-2 \\
    4 & 7 & 0-4 \\
    5 & 14 & 0-6 \\
    6 & 38 & 0-9 \\
    7 & 105 & 2-12\\
    8 & 392 & 3-16 \\
    9 & 1697 & 4-20 \\
    10 & 9430 & 5-25\\
    11 & 58522 & 8-30 \\
    12 & 348038 & 11-36 \\
    13 & 1323836 & 15-36 \\
    14 & 2447170 & 19-40 \\
    15 & 1358974 & 24-45 \\
    16 & 158459 & 30-48 \\
    17 & 4853 &  37-50\\
    18 & 225 &  43-51\\
    19 & 1 &  54\\ 
    20 & 1 &  60\\
    \hline

\end{tabular}
\end{center}
\begin{center}
{Table 1:}
Statistics of $\mathcal{R}(K_3,J_7)$\\
\end{center}

\medskip
None of the complements of
graphs in $\mathcal{R}(K_3,J_7;n)$ for $n\geq17$ could be split into a $(K_3,K_3,J_4)$-coloring, which implies the following theorem.

\begin{theorem} \label{thm:33J}
$R(K_3,K_3,J_4)=17$.
\end{theorem}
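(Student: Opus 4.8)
The plan is to establish the exact value $R(K_3,K_3,J_4)=17$ by combining the structural reduction of Lemma~\ref{lem:split} with the enumeration data summarized in Table~1. Lemma~\ref{lem:split} tells us that $R(K_3,K_3,J_4)=m+1$, where $m$ is the largest order of any \emph{splittable} $(J_7,K_3)$-good graph. So the entire task reduces to proving that $m=16$: that is, every $(J_7,K_3)$-good graph on $17$ or more vertices is \emph{unsplittable} (arrows $(K_3,J_4)$), while at least one splittable such graph exists on $16$ vertices. The key fact I would lean on is that the full family $\mathcal{R}(K_3,J_7)$ is finite (since $R(K_3,J_7)=21$) and has been completely enumerated, with the counts and edge ranges recorded in Table~1.

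First I would fix the reduction precisely: given a $(K_3,K_3,J_4;n)$-coloring, the union of the green and blue edge sets forms the complement of the red graph, and since red contains no $K_3$ while the green/blue part must avoid a monochromatic $J_4$ under \emph{some} $2$-coloring, the complement of red is exactly a splittable $(J_7,K_3)$-good graph—here the role of Lemma~\ref{lem:J7} (or even just the weaker $J_7\ra(K_3,J_4)$) is to guarantee that no independent set of size $7$ in the red complement can be bad, i.e. that the $J_7$-freeness of the complement is the correct constraint. Conversely, any splittable graph yields a coloring. Thus an upper bound of $17$ follows once I verify computationally that \emph{none} of the graphs in $\mathcal{R}(K_3,J_7;n)$ for $n\ge 17$ admits a splitting of its complement into two triangle-free graphs with the green/blue part $J_4$-free; Table~1 shows these families are nonempty but finite (sizes $4853,225,1,1$ for $n=17,18,19,20$), so this is a concrete, terminating check. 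For the matching lower bound I would exhibit an explicit splittable $(J_7,K_3)$-good graph on $16$ vertices—equivalently a $(K_3,K_3,J_4;16)$-coloring—which can be produced directly from the enumeration by splitting one of the $158459$ graphs in $\mathcal{R}(K_3,J_7;16)$.

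The main obstacle is the splittability test itself: for each candidate complement graph, one must decide whether its edge set can be $2$-colored so that neither color contains a $K_3$ (this forces each color class to be triangle-free) \emph{and} so that together they avoid a $J_4$ in the green/blue component under the induced arrowing. This is a combinatorial search over edge $2$-colorings, and proving that \emph{all} graphs at orders $17$ through $20$ fail requires exhausting these colorings for every isomorphism class—precisely the computation that Lemma~\ref{lem:split} and the $\mathcal{R}(K_3,J_7)$ enumeration are designed to make feasible. The correctness of the conclusion therefore rests on (i) the faithfulness of the reduction, which is secured by Lemma~\ref{lem:J7}, and (ii) the completeness of the enumerated family $\mathcal{R}(K_3,J_7)$, which is cross-validated against the independently computed data of \cite{K3J7comp,MPR}. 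Once the search returns ``no splitting exists'' for all $n\ge 17$ and ``a splitting exists'' for some $n=16$ graph, the equality $R(K_3,K_3,J_4)=16+1=17$ follows immediately from Lemma~\ref{lem:split}.
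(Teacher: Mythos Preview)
Your overall strategy is exactly the paper's: invoke Lemma~\ref{lem:split} to reduce the problem to determining the maximal order $m$ of a splittable $(J_7,K_3)$-good graph, use the complete enumeration of $\mathcal{R}(K_3,J_7)$ recorded in Table~1, and then test each graph on $n\ge 17$ vertices for splittability by computer. The paper does precisely this, via two independent implementations (a SAT encoding and a direct recursive search), finding no splittable graphs for $n\ge 17$ and $11813$ splittable ones at $n=16$.

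However, your description of the splittability condition is garbled in a way that matters. A graph $G$ is \emph{splittable} iff $G\not\ra(K_3,J_4)$, i.e.\ its edges can be $2$-colored so that the \emph{green} part contains no $K_3$ and the \emph{blue} part contains no $J_4$. You instead write that one must check whether the edges can be $2$-colored ``so that neither color contains a $K_3$'' and ``together they avoid a $J_4$''; earlier you speak of splitting ``into two triangle-free graphs with the green/blue part $J_4$-free.'' That is not the right test: only one color must be triangle-free, the other must be $J_4$-free, and there is no joint condition on their union (the union is all of $G$, whose $J_7$-freeness is already guaranteed). If the computation were run with the condition as you stated it, it would be deciding $G\not\ra(K_3,K_3)$ together with an ill-defined extra constraint, not $G\not\ra(K_3,J_4)$, and the argument would not establish the theorem. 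Once you correct the splitting criterion to the asymmetric $(K_3,J_4)$ one, your plan coincides with the paper's proof.
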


\begin{proof}
We determined all splittable $(J_7,K_3)$-good graphs
of maximal order via two independent computational methods. First, for each $(J_7,K_3;n)$-good graph $G$ we created a conjunctive normal form (CNF) Boolean formula $\phi (G)$ which is satisfiable iff $G\not\rightarrow (K_3,J_4)$. The satisfiability of $\phi(G)$ was tested using a standard SAT-solver. In the second method we implemented our own computer algorithm which exhaustively searched through all relevant edge colorings.

Neither of the two methods found any splittable $(J_7, K_3; n)$-good graphs for $n \geq 17$,
and both found the same 11813 splittable $(J_7,K_3;16)$-good graphs.
So, by Lemma \ref{lem:split}, $R(K_3, K_3, J_4) = 17$.
Below we give further details about each method.\\

 \noindent{\bf Splittability via Satisfiability:}\\
If $G$ is a $(J_7, K_3)$-good graph, we wish to see if $G \ra (K_3, J_4)$. We consider each edge of $G$ to be a Boolean variable, and our colors as $F$ and $T$. We define the clauses of $\phi(G)$ as follows:
\begin{itemize}
\item For each $K_3$ with edges $\{e_1, e_2, e_3\}$ include the clause $(e_1\vee e_2\vee e_3)$. This forces at least one edge to have color $T$, so no $K_3$ will be formed in color $F$.
\item For each $J_4$ with edges $\{e_1, e_2, e_3, e_4, e_5\}$ include the clause $(\overline {e_1}\vee \overline{e_2} \vee \overline{ e_3} \vee \overline{e_4} \vee \overline{e_5})$. This forces at least one edge to have color $F$, so no $J_4$ will be formed in color $T$.
\end{itemize}
Clearly, the resulting $\phi(G)$ is satisfiable if and only if $G$ is splittable. We used the SAT-solver PicoSAT \cite{Bie}, the gold medal winner of the 2007 International SAT Competition in the industrial category, and found that no $(J_7,K_3;17)$-good graphs were splittable.\\

\noindent{\bf Recursive Coloring:}\\
We implemented the function $f(uncolored, green, blue)$
that takes three graphs as input. It attempts to take an
uncolored edge and add it to the current set of green edges
or blue edges, and recurse. If either recursion is successful,
{\bf True} is returned. In this way,
$f(E(G), \emptyset, \emptyset)$ returns {\bf True}
if $G$ can be split into a $(K_3, J_4)$-good graph
using the following algorithm.\\
\smallskip

\noindent$f(uncolored, green, blue) =$\\
\indent {\bf False} if $green$ contains a $K_3$
or $blue$ contains a $J_4$\\
\indent {\bf True} if $uncolored$ is empty\\
\indent {\bf Else} let $\{i,j\}$ be an edge in $uncolored$,\\
\indent \indent return $f(uncolored \setminus \{i,j\}, green \cup \{i,j\}, blue)$ \\
\indent \ \ \ \ \ \ \ \ \ \ \  $\vee f(uncolored \setminus \{i,j\}, green, blue \cup \{i,j\})$\\
\end{proof}

It could be tempting to obtain Theorem 4 by a simpler
approach of splitting $(K_7,K_3)$-good graphs.
However, the number of such graphs is much larger
than $(J_7,K_3)$-good graphs, and it seems infeasible
even just to enumerate the set $\mathcal{R}(K_7,K_3)$.

In another attempt to construct $\mathcal{R}(K_3,K_3,J_4;n)$ for $n\geq17$ we tried to enumerate $\mathcal{R}(K_6,J_4)$, since $K_6 \ra (K_3,K_3)$ and thus splitting $(K_6,J_4)$-good graphs leads to all $(K_3,K_3,J_4)$-colorings.
For more than 12 vertices the number of $(K_6,J_4)$-good graphs became too large to handle.
The attempt was continued by extending only suitably selected $(K_6,J_4;12)$-good graphs.
Eventually, all $6817238$ $(K_6,J_4;19)$- and $24976$ $(K_6,J_4;20)$-good
graphs were constructed, and none were found on 21 vertices,
confirming the previously unpublished results by McNamara that
$R(K_6,J_4)=21$ \cite{McN}. No $(K_6,J_4;19)$-good graphs could
be split into a $(K_3,K_3,J_4)$-coloring,
proving $R(K_3,K_3,J_4)\leq19$. However, the attempt
to enumerate $\mathcal{R}(K_6,J_4;18)$ was computationally infeasible. \\

\section {More Bounds}

\begin{theorem}
$21\leq R(K_3,J_4,J_4)\leq 27$.
\end{theorem}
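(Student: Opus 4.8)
The plan is to establish the two bounds separately. For the lower bound $R(K_3,J_4,J_4)\geq 21$, I would exhibit an explicit $(K_3,J_4,J_4;20)$-coloring, i.e.\ a 3-coloring of the edges of $K_{20}$ with no red $K_3$, no green $J_4$, and no blue $J_4$. The natural source for such a construction is a highly symmetric object: since red must be triangle-free on $20$ vertices and the other two colors must avoid $K_4-e$, a promising candidate is a cyclic (circulant) coloring on $\mathbb{Z}_{20}$, where each color class is a union of difference classes. I would search for a partition of the nonzero differences $\{1,\dots,10\}$ into a red set whose Cayley graph is triangle-free together with two sets each of whose Cayley graphs is $J_4$-free, and verify the three forbidden-subgraph conditions; the existence of such a coloring is exactly what the $*$ in Figure~\ref{fig:RFlow} records, so this is a construction to be presented rather than discovered from scratch.

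For the upper bound $R(K_3,J_4,J_4)\leq 27$, I would use the standard neighborhood-counting argument (6.1.a in \cite{Rad}) bootstrapped off smaller Ramsey numbers appearing in the figure. Suppose $G$ is a $(K_3,J_4,J_4;27)$-coloring and fix any vertex $v$. Partition the remaining $26$ vertices by the color of their edge to $v$. The red neighborhood $N_r(v)$ must be an independent set in red (else a red $K_3$ through $v$), so within $N_r(v)$ only green and blue appear, giving a $(J_4,J_4)$-coloring; since $R(J_4,J_4)=10$ \cite{Rad}, we get $|N_r(v)|\leq 9$. For a green neighbor, $N_g(v)$ cannot contain a green edge whose endpoints are also green-joined to $v$ in a way completing a $J_4$; more precisely $N_g(v)$ induces a $(K_3,J_4-e,J_4)$-type restriction, and the clean bound comes from $R(K_3,J_4,J_4-\text{something})$, but the simplest usable estimate is $|N_g(v)|\leq R(K_3,J_4,J_4)-1$ applied one level down via the induced parameters. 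I would instead invoke the bound already tabulated, using $R(K_3,K_3,J_4)=17$ and $R(K_3,J_4,J_4-e)$-style reductions to cap each neighborhood so that $1+|N_r(v)|+|N_g(v)|+|N_b(v)|\leq 26$ forces a contradiction at $27$.

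The cleanest route is to apply the generic upper-bound inequality directly. For each color $c$ and each vertex $v$, deleting $v$ and restricting to $N_c(v)$ reduces $G_c$ by one vertex (since $J_4$ contains $K_3$, a green edge inside $N_g(v)$ together with $v$ gives a green $J_4$ only if the two neighbors are green-adjacent, so $N_g(v)$ is $(K_3,K_3,J_4)$-good after the reduction), yielding $|N_g(v)|\leq R(K_3,K_3,J_4)-1=16$ and symmetrically $|N_b(v)|\leq 16$, while $|N_r(v)|\leq R(J_4,J_4)-1=9$. Then $27=1+|N_r(v)|+|N_g(v)|+|N_b(v)|\leq 1+9+16+16=42$ is too weak, so I would sharpen by choosing $v$ to lie in a monochromatic configuration or by averaging, using that red is triangle-free (hence $N_r(v)$ restricted is $(J_4,J_4)$-good giving the $9$ cap) and that the green/blue caps of $16$ cannot all be attained simultaneously.

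The hard part will be tightening the upper bound from the trivial neighborhood sum down to $27$: the naive application of 6.1.a overshoots, so the real work is arguing that the three neighborhood bounds $9$, $16$, $16$ cannot be met at a single vertex, equivalently that some vertex has total degree-profile summing below $26$. I expect this to require either a global counting argument over all vertices (summing $|N_c(v)|$ and using edge counts in each color) or a case analysis exploiting the rigidity of extremal $(J_4,J_4;9)$- and $(K_3,K_3,J_4;16)$-colorings; if a fully self-contained combinatorial argument proves elusive, the bound $27$ may instead be justified by citing the monotonicity/standard-bound pipeline noted in the caption of Figure~\ref{fig:RFlow} applied to the known value $R(K_3,K_3,J_4)=17$.
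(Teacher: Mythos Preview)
Your lower-bound plan---exhibit an explicit $(K_3,J_4,J_4;20)$-coloring---is fine in principle and matches the paper's approach (the paper obtains its witness by splitting the unique $(J_7,K_3;20)$-good graph rather than by a circulant search, but either way the content is a verified construction).

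The upper bound, however, has a genuine gap. Your neighborhood argument yields only $1+9+16+16=42$, as you yourself note, and nothing you write afterwards closes the distance to $27$: the vague appeals to ``averaging'', ``rigidity of extremal colorings'', or ``the monotonicity/standard-bound pipeline'' are not an argument, and in fact the standard recursive bound applied to the surrounding known values does \emph{not} produce $27$ (that is precisely why the $27$ carries a~$*$ in Figure~\ref{fig:RFlow}). Even sharpening the green/blue neighborhoods from $K_3$-free to $P_3$-free (which is the correct local constraint, since a green $P_3$ inside $N_g(v)$ together with $v$ already makes a green $J_4$) will not get you anywhere near $27$ by counting alone.

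The idea you are missing is the arrowing lemma $J_7\to(K_3,J_4)$ (Lemma~\ref{lem:J7}). In any $(K_3,J_4,J_4)$-coloring, look at the green color class $G$: it has no $J_4$, and its complement (the red$\cup$blue graph) can contain no $J_7$, because any such $J_7$ would, by the lemma, force a red $K_3$ or a blue $J_4$. Hence $G$ is a $(J_4,J_7)$-good graph, and since $R(J_4,J_7)=28$ this already gives $R(K_3,J_4,J_4)\le 28$. The final step to $27$ uses that the $(J_4,J_7;27)$-good graph is unique (the Schl\"afli graph) and a computation showing its complement is unsplittable into $(K_3,J_4)$. None of this is reachable from vertex-neighborhood counting; the leverage comes from bounding a whole color class globally via the arrowing result.
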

\begin{proof}
The lower bound is established by a $(K_3,J_4,J_4;20)$-coloring presented in Figure \ref{fig:lb21}. It was obtained by splitting the unique $(J_7,K_3;20)$-good graph.

\begin{figure}[H]
\begin{center}
\begin{verbatim}
           0 2 2 3 3 2 2 3 2 3 3 3 2 2 1 1 1 1 1 1 
           2 0 3 2 3 1 2 3 3 2 1 2 3 1 2 1 1 1 3 2 
           2 3 0 3 2 1 3 2 2 1 2 3 1 3 1 2 1 1 3 2 
           3 2 3 0 2 1 3 2 1 2 3 1 3 2 1 1 2 1 2 3 
           3 3 2 2 0 1 1 1 3 3 2 2 2 3 1 1 1 2 2 3 
           2 1 1 1 1 0 3 2 3 2 2 2 3 3 2 2 3 3 1 1 
           2 2 3 3 1 3 0 2 3 3 2 1 1 1 3 2 2 1 2 1 
           3 3 2 2 1 2 2 0 1 1 1 3 3 2 2 3 3 1 1 2 
           2 3 2 1 3 3 3 1 0 2 3 2 1 1 2 3 1 2 2 1 
           3 2 1 2 3 2 3 1 2 0 2 1 2 1 3 1 3 2 3 1 
           3 1 2 3 2 2 2 1 3 2 0 1 1 2 1 3 2 3 3 1
           3 2 3 1 2 2 1 3 2 1 1 0 2 3 3 2 1 3 1 2 
           2 3 1 3 2 3 1 3 1 2 1 2 0 2 2 1 2 3 1 3 
           2 1 3 2 3 3 1 2 1 1 2 3 2 0 1 2 3 2 1 3 
           1 2 1 1 1 2 3 2 2 3 1 3 2 1 0 3 2 3 2 3 
           1 1 2 1 1 2 2 3 3 1 3 2 1 2 3 0 3 2 2 3 
           1 1 1 2 1 3 2 3 1 3 2 1 2 3 2 3 0 2 3 2 
           1 1 1 1 2 3 1 1 2 2 3 3 3 2 3 2 2 0 3 2 
           1 3 3 2 2 1 2 1 2 3 3 1 1 1 2 2 3 3 0 2 
           1 2 2 3 3 1 1 2 1 1 1 2 3 3 3 3 2 2 2 0 
\end{verbatim}
\caption {A $(K_3,J_4,J_4;20)$-coloring} 
\label{fig:lb21}
\end{center}
\end{figure}

For the upper bound, consider the graph $G$ formed by the green edges
of any $(K_3,J_4,J_4)$-coloring. By Lemma \ref{lem:J7}, $G$ must be a
$(J_4,J_7)$-good graph. $R(J_4,J_7)=28$, and it is known that there
exists a unique $(J_4,J_7;27)$-good graph \cite{McR}. This is the well
known strongly 10-regular Schl\"{a}fli graph \cite{Sei}. Reducing graph
splittability to Boolean satisfiability as in Section \ref{sec:33J4},
we determined that the complement of the Schl\"{a}fli graph is
unsplittable, and thus $R(K_3,J_4,J_4)\leq27$.
\end{proof}

We note that, interestingly, the same Schl\"{a}fli graph can be split
into two $J_4$-free graphs, which establishes the bound
$R_3(J_4) \ge 28$ \cite{Exoo}.

\begin{theorem}
$33\leq R(J_4,J_4,K_4)$.
\end{theorem}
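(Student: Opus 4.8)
The statement $R(J_4,J_4,K_4)\ge 33$ is a lower bound, so the plan is to exhibit an explicit $(J_4,J_4,K_4;32)$-coloring: a partition of the edges of $K_{32}$ into red, green, and blue with no red $J_4$, no green $J_4$, and no blue $K_4$. By the symmetry of multicolor Ramsey numbers noted in the introduction, this is the same as $R(K_4,J_4,J_4)$, so the one construction settles both lower bounds raised in the abstract. The key structural observation driving the construction is that blue is only required to avoid $K_4$, which is strictly weaker than avoiding $J_4$; since a $(J_4,J_4,J_4)$-coloring is automatically a $(J_4,J_4,K_4)$-coloring, the earlier bound $R_3(J_4)\ge 28$ gives $R(J_4,J_4,K_4)\ge 28$ for free, and the extra room up to $32$ vertices must come from exploiting blue's freedom to contain triangles and copies of $J_4$ (but never $K_4$).

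To locate the coloring I would impose cyclic (Cayley) symmetry on the vertex set $\mathbb{Z}_{32}$, assigning one of the three colors to each of the $16$ difference classes $\{1,\dots,16\}$ and extending by translation. This collapses a search over arbitrary $3$-colorings of $K_{32}$ to a search over a small number of color-assignments to difference classes, each of which can be checked automatically for the three forbidden configurations. As an alternative, more structured route, one can try to extend the Schl\"afli-based $(J_4,J_4,J_4;27)$-coloring by five vertices, recoloring and augmenting with blue edges that create blue triangles and blue $J_4$'s while carefully avoiding a blue $K_4$, or compose a $(J_4,K_4)$-good $2$-coloring with an added green layer. In practice the cyclic search is the cleanest to present, and the resulting coloring would be displayed as a matrix exactly as in the $(K_3,J_4,J_4;20)$ construction of the preceding theorem.

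Verification of the three conditions is routine once the coloring is fixed. A color class is $J_4$-free precisely when no edge of that color lies in two triangles of the same color, so red and green can each be certified by checking, for every edge, that it is in at most one monochromatic triangle; blue $K_4$-freeness is checked directly over all $4$-subsets that are entirely blue. This is the same style of exhaustive/automated certification used in the splittability and SAT arguments of Section~\ref{sec:33J4}, and it guarantees that the displayed coloring witnesses $K_{32}\not\ra(J_4,J_4,K_4)$, hence $R(J_4,J_4,K_4)\ge 33$.

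The main obstacle is not the verification but the search itself: the space of $3$-colorings of $K_{32}$ is astronomically large, so the difficulty lies in imposing enough symmetry (cyclic structure, or a composition/extension framework built on the $27$-vertex $(J_4,J_4,J_4)$-coloring) to make the search feasible while still reaching $32$ vertices. Once a candidate is found, confirming that it simultaneously avoids red $J_4$, green $J_4$, and blue $K_4$ is the comparatively easy final step.
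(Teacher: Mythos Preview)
Your high-level strategy matches the paper exactly: a lower bound of this type is proved by exhibiting and verifying a $(J_4,J_4,K_4;32)$-coloring, and everything you say about what must be checked (no $J_4$ in the first two colors, no $K_4$ in blue) is correct. The genuine gap is that you never actually produce a coloring. A plan to search for one---whether by restricting to circulant colorings on $\mathbb{Z}_{32}$ or by extending a known $27$-vertex coloring---is not yet a proof; the entire content of this theorem is the explicit witness, and without it nothing has been established.

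On the method itself: the paper did not use cyclic symmetry. The coloring was found by simulated annealing, and the matrix presented in the paper is visibly not circulant. So while your idea of cutting the search space by assigning colors to the $16$ difference classes of $\mathbb{Z}_{32}$ is a reasonable first thing to try, there is no guarantee a circulant witness exists at $n=32$; if it does not, your primary route stalls and you would have to fall back on a heuristic search over general colorings, which is exactly what the authors did. Your alternative suggestion of extending the Schl\"afli-based $(J_4,J_4,J_4;27)$-coloring by five vertices is plausible in spirit but again speculative: you would need to show such an extension exists, and the paper gives no indication that its coloring arose that way. In short, the outline is correct, but the proof is the construction, and that is precisely the part that is missing.
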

\begin{proof}
The lower bound is established by a $(J_4,J_4,K_4;32)$-coloring
presented in Figure \ref{fig:lb33}. This coloring was found
using a standard simulated annealing algorithm.

\begin{center}
\begin{figure}[H]
{\small
\begin{verbatim}
    0 1 3 3 2 3 3 2 2 1 3 2 2 1 2 3 3 1 3 2 1 1 1 2 2 1 1 1 3 2 3 3
    1 0 3 1 2 1 3 2 1 3 1 2 1 3 3 1 3 3 3 1 2 3 2 1 1 2 2 3 3 2 1 2
    3 3 0 2 2 2 1 1 3 3 3 2 3 2 2 1 1 3 3 2 3 2 2 1 1 3 2 1 1 3 1 1
    3 1 2 0 3 3 2 3 2 3 3 2 2 3 3 3 1 1 2 3 3 1 1 2 3 1 1 2 1 2 2 1
    2 2 2 3 0 1 3 3 1 2 2 3 2 3 1 1 2 1 3 1 1 3 1 1 3 3 1 2 3 3 2 3
    3 1 2 3 1 0 1 1 3 3 2 3 2 3 2 2 1 2 3 3 3 3 1 2 2 1 3 1 1 2 3 1
    3 3 1 2 3 1 0 3 1 1 2 3 1 1 2 2 2 1 3 3 3 1 3 3 2 1 2 3 3 2 1 2
    2 2 1 3 3 1 3 0 3 1 3 1 1 2 3 2 3 2 2 1 1 3 3 2 1 2 1 3 2 3 3 3
    2 1 3 2 1 3 1 3 0 1 2 1 3 2 2 2 3 3 3 3 1 2 2 3 1 2 2 1 3 1 3 1
    1 3 3 3 2 3 1 1 1 0 2 2 3 2 1 1 1 2 1 1 2 3 1 1 3 2 2 3 2 3 3 3
    3 1 3 3 2 2 2 3 2 2 0 1 3 3 1 1 1 1 2 2 1 3 1 2 3 3 3 1 3 3 3 1
    2 2 2 2 3 3 3 1 1 2 1 0 3 3 3 2 3 2 1 3 3 1 1 1 2 1 1 3 1 3 3 3
    2 1 3 2 2 2 1 1 3 3 3 3 0 2 1 3 1 2 2 3 3 3 1 1 3 2 2 1 1 1 3 2
    1 3 2 3 3 3 1 2 2 2 3 3 2 0 1 3 1 3 1 1 2 3 3 1 2 3 1 3 1 2 2 2
    2 3 2 3 1 2 2 3 2 1 1 3 1 1 0 3 2 3 2 3 2 1 3 3 1 1 3 2 3 3 1 1
    3 1 1 3 1 2 2 2 2 1 1 2 3 3 3 0 3 3 1 2 2 1 3 3 3 1 1 3 1 1 2 2
    3 3 1 1 2 1 2 3 3 1 1 3 1 1 2 3 0 3 3 2 1 1 2 2 3 2 3 3 3 1 2 3
    1 3 3 1 1 2 1 2 3 2 1 2 2 3 3 3 3 0 1 2 3 2 2 1 1 3 3 2 1 1 2 3
    3 3 3 2 3 3 3 2 3 1 2 1 2 1 2 1 3 1 0 2 1 2 2 3 2 3 3 1 2 3 1 1
    2 1 2 3 1 3 3 1 3 1 2 3 3 1 3 2 2 2 2 0 2 3 3 3 3 2 2 1 1 2 1 1
    1 2 3 3 1 3 3 1 1 2 1 3 3 2 2 2 1 3 1 2 0 2 3 2 3 1 3 2 1 3 1 3
    1 3 2 1 3 3 1 3 2 3 3 1 3 3 1 1 1 2 2 3 2 0 3 2 1 2 3 1 3 2 2 2
    1 2 2 1 1 1 3 3 2 1 1 1 1 3 3 3 2 2 2 3 3 3 0 2 1 3 3 2 3 2 1 3
    2 1 1 2 1 2 3 2 3 1 2 1 1 1 3 3 2 1 3 3 2 2 2 0 3 1 2 3 3 3 3 1
    2 1 1 3 3 2 2 1 1 3 3 2 3 2 1 3 3 1 2 3 3 1 1 3 0 2 2 2 3 3 2 3
    1 2 3 1 3 1 1 2 2 2 3 1 2 3 1 1 2 3 3 2 1 2 3 1 2 0 3 2 3 1 3 3
    1 2 2 1 1 3 2 1 2 2 3 1 2 1 3 1 3 3 3 2 3 3 3 2 2 3 0 3 2 3 1 1
    1 3 1 2 2 1 3 3 1 3 1 3 1 3 2 3 3 2 1 1 2 1 2 3 2 2 3 0 2 3 3 2
    3 3 1 1 3 1 3 2 3 2 3 1 1 1 3 1 3 1 2 1 1 3 3 3 3 3 2 2 0 2 2 2
    2 2 3 2 3 2 2 3 1 3 3 3 1 2 3 1 1 1 3 2 3 2 2 3 3 1 3 3 2 0 1 1
    3 1 1 2 2 3 1 3 3 3 3 3 3 2 1 2 2 2 1 1 1 2 1 3 2 3 1 3 2 1 0 3
    3 2 1 1 3 1 2 3 1 3 1 3 2 2 1 2 3 3 1 1 3 2 3 1 3 3 1 2 2 1 3 0
\end{verbatim}
}
\caption {A $(J_4,J_4,K_4;32)$-coloring} 
\label{fig:lb33}
\end{figure}
\end{center}
\end{proof}

\section {Future Work}
Our work answers some of the open questions of Arste, Klamroth,
and Mengersen \cite{AKM}, while others remain open and should be studied
more. In particular, we think that
further progress on the known bounds for $R(K_3,J_4,J_4)$ and
$R(J_4,J_4,J_4)$ is feasible, definitely more so than for $R(K_3,K_3,K_4)$.
Another interesting project would be to study 3-color Ramsey numbers with
the parameters as in this paper, but in addition with at least one
color avoiding $C_4$.

\bigskip
\bigskip
\noindent
{\bf Acknowledgement.} We would like to thank the
anonymous reviewers whose suggestions led to improved
presentation of this work.

\bigskip

\end{document}